\numberwithin{equation}{section}
\newtheorem{theorem}{Theorem}[section]
\newtheorem{example}[theorem]{Example}
\numberwithin{equation}{section}
\begin{document}
\title[Some applications of interpolating sequences]{Some applications of interpolating sequences for Banach spaces of analytic functions}
\author{Hamzeh Keshavarzi }

\maketitle
\begin{abstract}
M. J. Beltr\'{a}n-Meneua et al. \cite{beltran1} and E. Jord\'{a} and A. Rodr\'{i}guez-Arenas \cite{jorda} characterized the (uniformly) mean ergodic composition operators on $H^\infty(\mathbb{D})$ and $H^\infty_\nu (\mathbb{D})$, respectively.
In this paper, by using the interpolating sequences, we give other necessary and sufficient conditions for the (uniformly) mean ergodicity of composition operators on these spaces. \\
\textbf{MSC(2010):} primary: $47B33$, secondary: $47B38$; $47A35$.\\
\textbf{Keywords:} interpolating sequences, mean ergodic operators, composition operators, weighted Bergman space of infinite order.
\end{abstract}

\section{\textbf{Introduction}}

Let $H(\mathbb{D})$ be the space of all analytic functions on the open unit disk $\mathbb{D}$.  Let $\varphi$ be an analytic self-map of $\mathbb{D}$. The map $\varphi$ induces a composition operator $C_\varphi$ on $H(\mathbb{D})$ which is defined by $C_\varphi f=f\circ \varphi$. We refer to \cite{cowen1, shapiro} for various aspects of the theory of composition operators on holomorphic function spaces.
 Also, $H^\infty(\mathbb{D})$ is the Banach space of bounded functions in  $H(\mathbb{D})$ with supremum norm.

 Consider the continuous function $\nu:\mathbb{D}\rightarrow (0,\infty)$.
 For $0<p<\infty$, the weighted Bergman space $A^p_\nu(\mathbb{D})$ is the space of analytic functions $f$ on $\mathbb{D}$ where
 \begin{equation*}
\|f\|_{\nu,p}^p= \int_{z\in \mathbb{D}} |f(z)|^p \nu(z)<\infty.
\end{equation*}
 For $p=\infty$, the weighted Bergman space of infinite order is defined as:
\begin{equation*}
A^p_\nu(\mathbb{D})=H^\infty_\nu(\mathbb{D})=\{f\in H(\mathbb{D}): \ \sup_{z\in \mathbb{D}} |f(z)|\nu(z)<\infty\}.
\end{equation*}
Throughout this paper, $\nu$ has the following properties:
\begin{itemize}
\item [(i)] it is radial, that is, $\nu(z)=\nu(|z|)$),
\item [(ii)] it is decreasing,
\item [(iii)] $\lim_{|z|\rightarrow 1} \nu(z)=0$ and
\item [(iv)]  it satisfies the Lusky condition;
$\inf_n \frac{\nu(1-2^{-n-1})}{\nu(1-2^{-n})}>0.$
\end{itemize}

A well-known fact is that $\nu \simeq \tilde{\nu}$, where
\begin{equation*}
\tilde{\nu}(z) =\dfrac{1}{sup\{|f(z)|: \ f\in H^\infty_\nu(\mathbb{D}), \|f\|\leq 1\}}.
\end{equation*}
By \cite[1.2 Properties, Part (iv)]{bier}, for every $z\in \mathbb{D}$ there is a function $f_z$ in the unit ball of $H^\infty_\nu(\mathbb{D})$ such that $f_z(z)=1/\tilde{\nu}(z)$. Bonet et al. \cite{bonet} showed that for such weighted $\nu$, the composition operators generated by disk automorphisms are bounded on $H^\infty_\nu(\mathbb{D})$. Thus, by the Schwarz Lemma, every $C_\varphi$ is bounded on $H^\infty_\nu(\mathbb{D})$. Consider $\alpha>0$;
we use the notation $H^\infty_\alpha(\mathbb{D})$ for the space $H^\infty_\nu(\mathbb{D})$ with the weight $\nu(z)=(1-|z|)^\alpha$.

 Let $\{a_k\}$ be a discrete sequence of points in $\mathbb{D}$. If it is the case that for any bounded sequence of complex number $\{b_k\}$, one can find a bounded analytic function, $f$, in $\mathbb{D}$, such that
$$f(a_k)=b_k, \qquad k=1,2,...,$$
we say that $\{a_k\}$ is an interpolating sequence for $H^\infty(\mathbb{D})$.
Nevannlina \cite{nevanlinna} gave a necessary and sufficient condition for a sequence to be an interpolation sequence in $H^\infty(\mathbb{D})$. Carleson \cite{carleson1} with a simpler condition presented another characterization for these sequences in $H^\infty(\mathbb{D})$.
Using interpolation sequences, Carleson solved the corona conjecture on $H^\infty (\mathbb{D})$ in his celebrated paper \cite{carleson2}.
Berndtsson \cite{bern} gave a sufficient condition for  $H^\infty(\mathbb{B}_n)$-interpolating sequences.

 Consider the sequence
\begin{equation*}
M_n(T)=\dfrac{1}{n}\sum_{j=1}^n T^j,
\end{equation*}
where $T^j$ is the $j-th$ iteration of continuous operator $T$ on the Banach space $X$. We say that $T$ is mean ergodic if $M_n(T)$ converges to a bounded operator acting on $X$, in the strong operator topology. Also, $T$ is called uniformly mean ergodic if $M_n(T)$ converges in the operator norm.
M. J. Beltr\'{a}n-Meneua et al. \cite{beltran1} and E. Jord\'{a} and  A. Rodr\'{i}guez-Arenas \cite{jorda} characterized the (uniformly) mean ergodic composition operators on $H^\infty(\mathbb{D})$ and $H^\infty_\nu (\mathbb{D})$, respectively.
 In this paper, by using the interpolating sequences, we give other necessary and sufficient conditions for the (uniformly) mean ergodicity of composition operators on these spaces.

It is well-known that $H^\infty(\mathbb{D})$ is a Grothendieck Dunford-Pettis (GDP)  space.
Lusky \cite{lusky},  showed that $H^\infty_\nu(\mathbb{D})$ is isomorphic either to a $H^\infty(\mathbb{D})$ or to a $l^\infty$. Hence, $H^\infty_\nu(\mathbb{D})$ also is a GDP space. Lotz \cite{lotz} proved that if $X$ is a GDP space and in addition $\|T^n/n\|\rightarrow 0$, then $T$ is mean ergodic if and only if it is uniformly mean ergodic.

M. J. Beltr\'{a}n-Meneua et al. \cite{beltran1} used the above results to characterize (uniformly) mean ergodic composition operators on $H^\infty(\mathbb{D})$. Indeed, according to the result of Lotz, the mean ergodicity and uniformly mean ergodicity of composition operators on $H^\infty(\mathbb{D})$ are equivalent. Also, they gave another equivalent geometric condition with the mean ergodicity of composition operators. In this paper, we give another equivalent condition: we show that $C_\varphi$ on $H^\infty(\mathbb{D})$ is mean ergodic if and only if $\lim_{n\rightarrow \infty} \| \frac{1}{n} \sum_{j=1}^n C_{\varphi_j}\|_e=0$, where $\|.\|_e$ denotes the essential norm of operators. March T. Boedihardjo and William B. Johnson \cite{boe} investigated the mean ergodicity of operators in Calkin algebra.

E. Jord\'{a} and  A. Rodr\'{i}guez-Arenas \cite{jorda} characterized the (uniformly) mean ergodic composition operators on $H_\nu^\infty(\mathbb{D})$. Also, they showed that if $C_\varphi$ is mean ergodic then $\varphi$ has an interior Denjoy-Wolff point.
In this paper, we give another geometric necessary and sufficient conditions for the (uniformly) mean ergodic composition operators on $H^\infty_\nu(\mathbb{D})$, when $\varphi$ has an interior Denjoy-Wolff point.  Also,  we show that if $\varphi$ is not an elliptic disk automorphism,  then $C_\varphi$ is uniformly mean ergodic on  $H^\infty_\alpha(\mathbb{D})$  if and only if $M_n(C_\varphi)$ converges to $0$ in Calkin algebra of  $H^\infty_\alpha(\mathbb{D})$.

 Throughout the paper, we write $A\lesssim B$ when there is a positive constant $C$ such that $A\leq  CB$, and
$A\simeq B$ when $A\lesssim B$ and $B\lesssim A$.

\section{\textbf{Main Results}}
Let $\varphi$ be an analytic self-map of the unit disk.
The point in the following theorem is called the Denjoy-Wolff point
of $\varphi$.
\begin{theorem}[Denjoy-Wolff Theorem]
If $\varphi$, which is neither the identity nor an elliptic automorphism of $\mathbb{D}$, is an analytic map of the unit disk into itself, then there is a point $w$ in $\overline{\mathbb{D}}$ so that $ \varphi_j\rightarrow w$, uniformly on the compact subsets of $\mathbb{D}$.
\end{theorem}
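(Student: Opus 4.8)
The plan is to split the argument according to whether $\varphi$ has a fixed point inside $\mathbb{D}$. The two cases produce the two possible locations of $w$: an interior fixed point gives $w \in \mathbb{D}$, while the absence of an interior fixed point forces $w \in \partial \mathbb{D}$. The indispensable tool throughout is the Schwarz--Pick lemma, supplemented in the boundary case by an invariance property for horocycles (Wolff's lemma) and by Montel's theorem to extract locally uniform limits of the iterate family $\{\varphi_j\}$.

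In the first case, suppose $\varphi(p)=p$ for some $p\in\mathbb{D}$. I would first note that $p$ is the \emph{only} interior fixed point: two distinct interior fixed points would make $\varphi$ an isometry of the hyperbolic metric fixing two points, hence the identity, which is excluded by hypothesis. Applying the Schwarz--Pick lemma at $p$ gives $|\varphi'(p)|\le 1$, with equality precisely when $\varphi$ is an elliptic automorphism fixing $p$; since that too is excluded, $|\varphi'(p)|<1$. Conjugating by the automorphism of $\mathbb{D}$ carrying $p$ to $0$, I reduce to $p=0$ with $|\varphi'(0)|<1$, and then a routine Schwarz-lemma estimate together with a normal-families argument shows $\varphi_j\to 0$ uniformly on compact subsets; undoing the conjugation gives $\varphi_j\to w=p$.

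The second case, where $\varphi$ has no fixed point in $\mathbb{D}$, is the main obstacle. I would manufacture the boundary Denjoy--Wolff point by approximation: for $0<r<1$ set $\varphi_r=r\varphi$, so that $\varphi_r(\overline{\mathbb{D}})\subset\mathbb{D}$ and hence $\varphi_r$ has a unique fixed point $p_r\in\mathbb{D}$. Passing to a subsequence as $r\to 1^-$, using compactness of $\overline{\mathbb{D}}$, one gets $p_r\to w$; since $\varphi$ has no interior fixed point, $w$ must lie on $\partial\mathbb{D}$. The crucial step is to establish Wolff's lemma, namely that $\varphi$ maps each horocycle
\[
H(w,R)=\Bigl\{\,z\in\mathbb{D}:\tfrac{|w-z|^2}{1-|z|^2}<R\,\Bigr\}
\]
into itself; I would obtain this by writing the Schwarz--Pick inequality for $\varphi_r$ at its fixed point $p_r$ and letting $r\to 1^-$. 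Finally, to pass from this invariance to the stated convergence, I would take any locally uniform limit $\psi$ of a subsequence of $\{\varphi_j\}$ (such limits exist by Montel's theorem), use the horocycle invariance to force $\psi\equiv w$, and then invoke the standard argument upgrading subsequential convergence to convergence of the full sequence. The two delicate points are verifying that the limit $\psi$ is genuinely constant (rather than an interior-valued map) and confirming that the whole sequence $\varphi_j$, not merely a subsequence, converges to the single boundary point $w$.
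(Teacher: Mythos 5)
The paper offers no proof of this statement to compare against: it is the classical Denjoy--Wolff theorem, quoted as background from the composition-operator literature (cf.\ \cite{cowen1,shapiro}), and the paper uses it as a black box. So your proposal must stand on its own. Its architecture is the standard one --- split on the existence of an interior fixed point, handle the interior case by Schwarz--Pick rigidity plus conjugation to the origin, and handle the fixed-point-free case by Wolff's lemma obtained from the fixed points $p_r$ of $r\varphi$ --- and the interior-fixed-point case as you describe it is complete and correct.

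The gap is in the fixed-point-free case, at the step ``use the horocycle invariance to force $\psi\equiv w$.'' Horodisc invariance alone cannot do this: it says only that each orbit stays inside every horodisc it starts in, and that is perfectly compatible with a subsequential limit $\psi$ that is nonconstant, or constant equal to some $c\in\mathbb{D}$ (such a $c$ can sit comfortably inside a horodisc). What is actually needed is an argument that re-injects the hypothesis that $\varphi$ has no interior fixed point. The standard completion runs: if $\varphi_{j_k}\to\psi$ locally uniformly and $\psi$ takes a value in $\mathbb{D}$, pass to a further subsequence so that the gap iterates satisfy $\varphi_{j_{k+1}-j_k}\to\chi$; then $\chi\circ\psi=\psi$, and either the identity theorem (if $\psi$ is nonconstant) or two-point Schwarz--Pick rigidity applied to the distinct points $c$ and $\varphi(c)$ (if $\psi\equiv c$) forces $\chi=\mathrm{id}$; a locally uniform limit of iterates equal to the identity makes $\varphi$ injective (pass to the limit in $\varphi_{m_k}(a)=\varphi_{m_k}(b)$) and surjective (Hurwitz), i.e.\ an automorphism, and fixed-point-free automorphisms are checked directly (conjugating to $u\mapsto\lambda u$, $\lambda>1$, or $u\mapsto u+1$ on the half-plane) to have iterates escaping to the boundary, a contradiction. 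This yields $|\varphi_j(z)|\to 1$, and only at that point does horodisc invariance finish the proof, via $|w-\varphi_j(z)|^2< R\bigl(1-|\varphi_j(z)|^2\bigr)\to 0$. A second, smaller omission: letting $r\to 1^-$ in the Schwarz--Pick inequality at $p_r$ in its pseudo-hyperbolic form gives only the vacuous bound $1\le 1$, because $|p_r|\to 1$; one must first rewrite it, using $1-\bigl|\tfrac{a-p}{1-\bar p a}\bigr|^2=\tfrac{(1-|a|^2)(1-|p|^2)}{|1-\bar p a|^2}$ and cancelling the factor $1-|p_r|^2$, as $\tfrac{|1-\overline{p_r}\,\varphi_r(z)|^2}{1-|\varphi_r(z)|^2}\le\tfrac{|1-\overline{p_r}\,z|^2}{1-|z|^2}$, and only then take the limit. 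Your final upgrade from subsequential to full convergence is indeed routine once every subsequential limit is known to be the constant $w$; it is the preceding step that carries the real weight of the theorem.
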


If $\varphi$ has an interior Denjoy-Wolff point, then we can conjugate it with an analytic self-map of $\mathbb{D}$ that fixes the origin. Hence, if $\varphi$ has an interior Denjoy-Wolff point, we investigate only the case $\varphi(0)=0$.

\begin{theorem} \label{t1}
Consider $\varphi:\mathbb{D}\rightarrow \mathbb{D}$ analytic with $\varphi(0)=0$, which is neither the identity nor an elliptic automorphism of $\mathbb{D}$. Then the following statements are equivalent:
\begin{itemize}
\item[(i)] $C_\varphi$ is mean ergodic on $H^\infty_\nu(\mathbb{D})$.
\item[(ii)] $C_\varphi$ is uniformly mean ergodic on $H^\infty_\nu(\mathbb{D})$.
\item[(iii)] For some (every) sequence $(r_n)\in (0,1)$ where $r_n\uparrow 1$,
\begin{equation} \label{e1}
\lim_{n\rightarrow \infty} \sup_{|z|>r_n} \dfrac{\nu(z)}{\nu(\varphi_n(z))}=0.
\end{equation}
\end{itemize}
Moreover, if $\sup_{z\in \mathbb{D}} \nu(z)\leq 1$, then
\begin{itemize}
\item[(iv)] We have
\begin{equation} \label{e2}
\lim_{n\rightarrow \infty} \sup_{|z|<1}\dfrac{\nu(z)}{\nu(\varphi_n(z))} |\varphi_n(z)|=0.
\end{equation}
\end{itemize}
\end{theorem}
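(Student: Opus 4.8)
The plan is to analyse the Cesàro means $M_n:=M_n(C_\varphi)$ relative to the rank-one operator $P$ defined by $(Pf)(z)=f(0)$. Since $\varphi(0)=0$ and $\varphi$ is not an elliptic automorphism, the Denjoy--Wolff theorem gives $\varphi_n\to0$ uniformly on compacta, so $\ker(I-C_\varphi)$ consists exactly of the constants and $P$ is the natural candidate for $\lim M_n$; note $\|P\|\le1$ because $\nu(0)|f(0)|\le\|f\|_\nu$. First I would record that $C_\varphi$ is power bounded: as $|\varphi_n(z)|\le|z|$ and $\nu$ is radial decreasing, $\nu(\varphi_n(z))\ge\nu(z)$, whence $F_n(z):=\nu(z)/\nu(\varphi_n(z))\le1$ and $\|C_{\varphi_n}\|\simeq\sup_z\nu(z)/\tilde\nu(\varphi_n(z))\simeq\sup_z F_n(z)\le C$. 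In particular $\|C_\varphi^n\|/n\to0$, and since $H^\infty_\nu(\mathbb{D})$ is a GDP space, Lotz's theorem yields $\text{(i)}\Leftrightarrow\text{(ii)}$ for free; the remaining work is the equivalence with (iii) and (iv).

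For the sufficiency $\text{(iii)}\Rightarrow\text{(ii)}$, write $f-f(0)=\mathrm{id}\cdot h$ with $h\in H^\infty_\nu(\mathbb{D})$; a standard bounded point-division estimate gives $\|h\|_\nu\lesssim\|f\|_\nu$. Then $f(\varphi_j(z))-f(0)=\varphi_j(z)\,h(\varphi_j(z))$, so that, with $a_j(z):=F_j(z)|\varphi_j(z)|$,
\[
\|(M_n-P)f\|_\nu\le\frac1n\sum_{j=1}^n\|f\circ\varphi_j-f(0)\|_\nu\le\|h\|_\nu\,\frac1n\sum_{j=1}^n\sup_{z}a_j(z).
\]
Thus it suffices to show that (iii) forces $\sup_z a_n(z)\to0$ (condition (iv) is literally this statement, the hypothesis $\sup_z\nu(z)\le1$ being a harmless normalisation, since rescaling $\nu$ changes neither the space nor $C_\varphi$), after which a Cesàro argument finishes. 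To pass between (iii) and $\sup_z a_n\to0$ I would split the supremum at a cleverly chosen radius: using uniform convergence on compacta I can pick $r_n\uparrow1$ with $\sup_{|z|\le r_n}|\varphi_n(z)|\to0$, so that on $\{|z|\le r_n\}$ one has $a_n(z)\le|\varphi_n(z)|\to0$ (as $F_n\le1$), while on $\{|z|>r_n\}$ one has $a_n(z)\le F_n(z)$, controlled by (iii). The converse split uses the dichotomy that if $|z|>r_n$ then either $|\varphi_n(z)|\ge\rho$, making $F_n(z)\le a_n(z)/\rho$ small, or $|\varphi_n(z)|<\rho$, forcing $F_n(z)\le\nu(r_n)/\nu(\rho)\to0$.

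The substantial direction is the necessity $\text{(ii)}\Rightarrow\text{(iii)}$, and this is where interpolating sequences enter. Arguing contrapositively, if (iii) fails then $\sup_z a_n(z)\not\to0$, so there are $\delta>0$, $n_k\uparrow\infty$ and points $z_k$ with $a_{n_k}(z_k)\ge\delta$; setting $w_k=\varphi_{n_k}(z_k)$ this gives $|z_k|\to1$, $|w_k|\to1$ and $\nu(z_k)\ge\delta\,\nu(w_k)$. Because moduli along an orbit decrease, every point $\varphi_1(z_k),\dots,\varphi_{n_k}(z_k)$ lies near the boundary and, crucially, the weight is comparable, $\delta\,\nu(w_k)\le\nu(z_k)\le\nu(\varphi_j(z_k))\le\nu(w_k)$, along the whole orbit segment. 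I would then use an $H^\infty_\nu$--interpolating sequence adapted to a suitably separated subset of this segment to produce $f_k$ with $\|f_k\|_\nu\lesssim1$, $f_k(0)=0$, and $f_k$ near-extremal with aligned phase along the segment, i.e. $\mathrm{Re}\,f_k(\varphi_j(z_k))\gtrsim1/\nu(\varphi_j(z_k))$ for all $j\le n_k$. Evaluating at $z_k$,
\[
\|(M_{n_k}-P)f_k\|_\nu\ge \nu(z_k)\,\Big|\tfrac1{n_k}\sum_{j=1}^{n_k}f_k(\varphi_j(z_k))\Big|\gtrsim\frac{\nu(z_k)}{n_k}\sum_{j=1}^{n_k}\frac1{\nu(\varphi_j(z_k))}\ge\frac{\nu(z_k)}{\nu(w_k)}\ge\delta,
\]
so $\|M_{n_k}-P\|\not\to0$ and $C_\varphi$ is not uniformly mean ergodic, contradicting (ii).

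The main obstacle is precisely the construction in the previous paragraph: defeating the Cesàro dilution requires $f_k$ to be simultaneously comparable to the extremal size $1/\nu$ and phase-aligned along an orbit segment containing $n_k\to\infty$ points, with interpolation constants bounded uniformly in $k$. The tension is that interpolation needs pseudohyperbolic separation while alignment needs the analytic function not to decay between nodes; reconciling these---by thinning the orbit to a uniformly separated Carleson subsequence and propagating the lower bound to the intermediate orbit points via Schwarz--Pick, using that $\nu$ is essentially constant on the segment---is the technical heart of the proof and the place where the Lusky condition on $\nu$ and the interpolation machinery are indispensable. I would close by noting the reformulation $\text{(iii)}\Leftrightarrow\|C_{\varphi_n}\|_e\to0$: since $F_n(z)$ is small whenever $|z|\to1$ while $|\varphi_n(z)|$ stays away from $1$, the boundary supremum in (iii) is comparable to the essential norm of $C_{\varphi_n}$, which also explains the Calkin-algebra statement advertised for $H^\infty_\alpha(\mathbb{D})$.
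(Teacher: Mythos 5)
Your proposal matches the paper where it is routine, and improves on it in one place: the equivalence (i)$\Leftrightarrow$(ii) via power boundedness and Lotz's theorem is exactly the paper's argument, and your sufficiency chain (iii)$\Rightarrow$(iv)$\Rightarrow$(ii) using the division trick $f-f(0)=zh$, $\|h\|_\nu\lesssim\|f\|_\nu$, is a correct and more self-contained substitute for the paper's appeal to the norm formula for $\|C_{\varphi_i}-K_0\|$ from Bonet--Lindstr\"om--Wolf; your remark that $\sup_z\nu(z)\le 1$ is a harmless normalisation is also right. One small repair is needed there: on $\{|z|>r_n\}$ you invoke (iii) for a radius sequence you chose yourself, i.e.\ the ``for every sequence'' reading. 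To argue from ``for some sequence'' (which is needed to close the cycle of implications for the ``some'' version of (iii)), fix the radius $r_N$ provided by (iii) and use that $F_n(z)=\nu(z)/\nu(\varphi_n(z))$ is pointwise non-increasing in $n$ (Schwarz lemma plus monotonicity of $\nu$), exactly as the paper does.

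The genuine gap is in the necessity direction (ii)$\Rightarrow$(iii), precisely at the step you flag as the technical heart and leave as a plan. Your plan --- thin the orbit segment to a $\delta$-separated subsequence, interpolate near-extremal aligned values there, and propagate the lower bound to the omitted points by Schwarz--Pick --- cannot be made quantitative. Propagation of $\operatorname{Re}f\gtrsim 1/\nu$ from a retained node to an omitted node at pseudohyperbolic distance $\le\delta$ produces an error of order $M(\delta)\,\delta/\nu$, where $M(\delta)$ is the norm of the interpolating function you built on the $\delta$-separated set; so you need $M(\delta)\,\delta\ll1$. You give no construction achieving this, and the available constructions cannot: Beurling-type interpolation on a set containing $\delta$-close pairs forces norms $\gtrsim1/\delta$ (a function of norm $M$ with $f(p)=1$, $f(q)=0$ and $\rho(p,q)=\delta$ has $M\gtrsim1/\delta$ by Schwarz--Pick), and kernel-sum constructions degrade like negative powers of $\delta$ as well. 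So for every choice of the thinning scale the propagated error is at least of the same order as the values you are trying to preserve, and the argument does not close.

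What rescues the proof --- and what your sketch is missing --- is that no thinning is ever necessary: the orbit of a fixed $\varphi$ with $\varphi(0)=0$ which is not a rotation is \emph{automatically} uniformly separated wherever its moduli are bounded below. Writing $\varphi=z\psi$ with $\psi:\mathbb{D}\to\mathbb{D}$ and $|\psi(0)|=|\varphi'(0)|<1$, Schwarz--Pick gives $1-|\psi(z)|\ge c\,(1-|z|)$ with $c=\tfrac12\tfrac{1-|\psi(0)|}{1+|\psi(0)|}$, hence $1-|\varphi(z)|\ge(1+c|z|)(1-|z|)$; so along an orbit segment with $|\varphi_i(a_n)|\ge s$ the boundary distances grow geometrically and consecutive points are uniformly pseudohyperbolically separated. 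This is why $\{\varphi_i(a_n)\}_{i=1}^{n}$ is an interpolating sequence with constant independent of $n$ --- the content of the Cowen--MacCluer lemma the paper cites. On top of that, the paper uses two devices your outline omits: Berndtsson's Beurling-type functions $f_{i,n}$ with the absolute-sum bound $\sup_z\sum_i|f_{i,n}(z)|\le M$ (plain interpolation of bounded targets is not enough, since the targets have the extremal size $1/\tilde\nu$), and the associated-weight extremal functions $f_{\varphi_i(a_n)}$ of norm $\le1$ with $f_{\varphi_i(a_n)}(\varphi_i(a_n))=1/\tilde\nu(\varphi_i(a_n))$. The combination $f_n(z)=\sum_i z\overline{\varphi_i(a_n)}\,f_{i,n}(z)\,f_{\varphi_i(a_n)}(z)$ then has norm $\lesssim M$, vanishes at $0$, and takes the exact positive values $|\varphi_i(a_n)|^2/\tilde\nu(\varphi_i(a_n))$ at all orbit points, which yields your displayed lower bound with no propagation at all. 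As it stands, your necessity argument is a statement of what must be constructed rather than a construction, and the method you propose for it would fail.
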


\begin{proof}
(i)$\Leftrightarrow$ (ii):
If $f$ is in the unit ball of  $H^\infty_\nu(\mathbb{D})$, then
\begin{equation*}
|f(z)|\leq \dfrac{1}{\nu(z)}, \qquad \forall z\in  \mathbb{D}.
\end{equation*}
Thus, by using the monotonicity of $\nu$ and the Schwarz lemma
\begin{equation*}
|f(\varphi_n (z))|\nu(z)\leq \dfrac{\nu(z)}{\nu(\varphi_n(z))}\leq 1, \qquad \forall  z\in \mathbb{D}.
\end{equation*}
Hence, $\|C_{\varphi_n}/n\| \rightarrow 0$. Therefore, from \cite{lotz} it follows that (i) and (ii) are equivalent.\\

(ii)$\Rightarrow$(iii). Consider the sequence $(r_n)\in (0,1)$ where $r_n\uparrow 1$ and \ref{e1} does not hold. Since for any $z\in \mathbb{D}$ the sequence $\{|\varphi_i(z)|\}$ is decreasing,
there are an $r>0$ and a sequence $\{a_n\}\subset \mathbb{D}$ where $|a_n|\rightarrow 1$ and
\begin{equation*}
\dfrac{\nu(a_n)}{\nu(\varphi_n(a_n))}\geq r.
\end{equation*}
Since  $|a_n|\rightarrow 1$, and $\lim_{|z|\rightarrow 1} \nu(z)=0$, there are some $s>0$ and $k\in \mathbb{N}$ such that $|\varphi_n(a_n)|\geq s$, for all $n\geq k$. Without loss of generality, we can let $k=1$. Thus, by the Schwarz lemma for $1\leq i \leq n$,
\begin{equation*}
\dfrac{\nu(a_n)}{\nu(\varphi_i(a_n))}\geq\dfrac{\nu(a_n)}{\nu(\varphi_n(a_n))}\geq r, \qquad |\varphi_i(a_n)|\geq |\varphi_n(a_n)|\geq s.
\end{equation*}
Hence, by the proof of \cite[Lemma 13]{cowen2}, for every $n$,  $\{\varphi_i(a_n)\}_{i=1}^n$ is an interpolation. Thus, by \cite[Page 3]{bern} there is some $M>0$ and $\{f_{i,n}\}_{i=1}^n$ in $H^\infty(\mathbb{D})$ such that
\begin{itemize}
\item[(a)] $f_{i,n}(\varphi_i(a_n))=1$ and $f_{i,n}(\varphi_j(a_n))=0$, for $i\neq j$.
\item[(b)] $\sup_{z\in \mathbb{D}} \sum_{i=1}^n |f_{i,n}(z)|\leq M$, for all $n\in \mathbb{N}$.
\end{itemize}
Now we define the functions
\begin{equation*}
f_n(z)=\sum_{i=1}^n z\overline{\varphi_i(a_n)}f_{i,n}(z)f_{\varphi_i(a_n)}(z).
\end{equation*}
 We can easily see that $\sup_{z\in \mathbb{D}} |f_n(z)|(1-|z|)^\alpha \leq M$, $f_n(0)=0$ and
\begin{equation*}
f_n(\varphi_i(a_n))=\dfrac{|\varphi_i(a_n)|^2}{\tilde{\nu}(\varphi_i(a_n))}.
\end{equation*}
Therefore,
\begin{equation*}
\dfrac{1}{n}\sum_{i=1}^n f_n(\varphi_i(a_n))\nu(a_n|) =\dfrac{1}{n}\sum_{i=1}^n \Big( \dfrac{\nu(a_n)}{\tilde{\nu}(\varphi_i(a_n))}\Big) |\varphi_i(a_n)|^2 \gtrsim s^2r.
\end{equation*}
But this is a contradiction with the uniformly mean ergodicity of $C_\varphi$.\\

(iii)$\Rightarrow$(i).
Let $\{r_n\}$ be a sequence in $(0,1)$ such that $r_n\uparrow 1$ and \ref{e1} holds.
Consider $\varepsilon>0$ arbitrary. By using \ref{e1}, there exists some $N$ such that
\begin{equation*}
\sup_{|z|>r_N} \dfrac{\nu(|z|)}{\nu(|\varphi_N(z)|)}<\varepsilon.
\end{equation*}
Now by the Schwarz lemma
\begin{equation*}
\sup_{|z|>r_N} \dfrac{\nu(|z|)}{\nu(|\varphi_n(z)|)}<\varepsilon, \qquad \forall n\geq N.
\end{equation*}
Let $f$ be in the unit ball of $H^\infty_\nu(\mathbb{D})$. Then
\begin{eqnarray*}
\sup_{|z|<1} \dfrac{1}{n} \sum_{i=1}^n |f\circ \varphi_i(z)-f(0)|\nu(|z|) &=& \sup_{|z|\leq r_N} \dfrac{1}{n} \sum_{i=1}^n |f\circ \varphi_i(z)-f(0)|\nu(|z|) \\
&+& \sup_{|z|>r_N} \dfrac{1}{n} \sum_{i=1}^n |f\circ \varphi_i(z)-f(0)|\nu(|z|) .
\end{eqnarray*}
Since $\varphi_i\rightarrow 0$ uniformly on $|z|\leq r_N$, we have
$$\lim_{n\rightarrow\infty} \sup_{|z|\leq r_N} \dfrac{1}{n} \sum_{i=1}^n |f\circ \varphi_i(z)-f(0)|\nu(|z|) =0.$$
Now for $|z|> r_N$: We know that
\begin{equation*}
|f\circ \varphi_i(z)-f(0)|\lesssim \dfrac{1}{\nu(|\varphi_i(z)|)},
\end{equation*}
thus, for $|z|>r_N$ and $n\geq N$,
\begin{eqnarray*}
\dfrac{1}{n} \sum_{i=1}^n |f\circ \varphi_i(z)-f(0)|\nu(|z|) &\lesssim& \dfrac{1}{n} \sum_{i=1}^n \dfrac{\nu(|z|)}{\nu(|\varphi_i(z)|)}\\
&=& \dfrac{1}{n} \Big[ \sum_{i=1}^N \dfrac{\nu(|z|)}{\nu(|\varphi_i(z)|)} + \sum_{i=N+1}^n \dfrac{\nu(|z|)}{\nu(|\varphi_i(z)|)} \Big]\\
&\leq& \dfrac{N}{n}+\dfrac{n-N}{n}\varepsilon.
\end{eqnarray*}
Taking $n$ as sufficiently large, we have $\frac{N}{n}<\varepsilon$. Since $\varepsilon>0$ was arbitrary, (ii) holds.\\

(iii)$\Rightarrow$(iv): Consider $\varepsilon>0$ arbitrary. By using \ref{e1}, there exists some $N$ such that
\begin{equation*}
\sup_{|z|>r_N} \dfrac{\nu(z)}{\nu(\varphi_N(z))}<\varepsilon.
\end{equation*}
Now by the Schwarz lemma
\begin{equation*}
\sup_{|z|>r_N} \dfrac{\nu(z)}{\nu(\varphi_n(z))} |\varphi_n(z)| <\varepsilon, \qquad \forall n\geq N.
\end{equation*}
Also, Since $\varphi_i\rightarrow 0$ uniformly on compact subsets of $\mathbb{D}$,  there exists some $M$ such that
 \begin{equation*}
\sup_{|z|\leq r_N} \dfrac{\nu(z)}{\nu(\varphi_n(z))} |\varphi_n(z)| \leq \sup_{|z|\leq r_N} |\varphi_n(z)| <\varepsilon, \qquad \forall n\geq M.
\end{equation*}
Therefore, for $n\geq \max\{M,N\}$
\begin{equation*}
\sup_{|z|<1}\dfrac{\nu(z)}{\nu(\varphi_n(z))} |\varphi_n(z)| <\varepsilon.
\end{equation*}
This gives the desire result.\\

(iv)$\Rightarrow$(ii): From the proof of  \cite[Proposition 2]{bonet3}, we can conclude that
\begin{equation*}
\|C_{\varphi_i}-K_0\|\simeq \sup_{|z|<1}\max\{ \dfrac{\nu(z)}{\tilde{\nu}(\varphi_i(z))},\nu(z)\} |\varphi_i(z)|.
\end{equation*}
Since $\sup_{z\in \mathbb{D}} \nu(z)\leq 1$, for every $z\in \mathbb{D}$ there is some $g_z$ in the unit ball of $H^\infty_\nu(\mathbb{D})$ where $|g_z(z)|\geq 1$. Thus,
$$\dfrac{1}{\tilde{\nu}(z)} =sup\{|f(z)|: \ f\in H^\infty_\nu(\mathbb{D}), \|f\|\leq 1\}\geq 1,$$
for all $z\in \mathbb{D}$. Therefore,
\begin{eqnarray*}
\|\dfrac{1}{n} \sum_{i=1}^n C_{\varphi_i}-K_0\| &\leq& \dfrac{1}{n} \sum_{i=1}^n \|C_{\varphi_i}-K_0\|\\
&\simeq& \dfrac{1}{n} \sum_{i=1}^n  \sup_{|z|<1}\max\{ \dfrac{\nu(z)}{\tilde{\nu}(\varphi_i(z))},\nu(z)\} |\varphi_i(z)|\\
&\leq& \dfrac{1}{n} \sum_{i=1}^n \sup_{|z|<1}\dfrac{\nu(z)}{\tilde{\nu}(\varphi_i(z))} |\varphi_i(z)|\\
 &\lesssim& \dfrac{1}{n} \sum_{i=1}^n \sup_{|z|<1}\dfrac{\nu(z)}{\nu(\varphi_i(z))} |\varphi_i(z)|\rightarrow 0.
\end{eqnarray*}
as $n\rightarrow \infty$.
\end{proof}

Inequality \ref{e1} implies the following examples.
\begin{example}
Let $\alpha>0$ and $k$ be a positive integer.
If $\varphi(z)=z^k$, then $C_\varphi$ is uniformly mean ergodic on every $H^\infty_\alpha(\mathbb{D})$.
In fact, if $r_n=1-\frac{1}{k^n}$, then \ref{e1} holds:
\begin{eqnarray*}
\lim_{n\rightarrow \infty}  \sup_{|z|>r_n} \dfrac{1-|z|}{1-|\varphi_n(z)|}&=& \lim_{n\rightarrow \infty}  \sup_{|z|>r_n} \dfrac{1-|z|}{1-|z|^{k^n}}= \lim_{n\rightarrow \infty}  \sup_{r>r_n} \dfrac{1-r}{1-r^{k^n}}\\
&=& \lim_{n\rightarrow \infty}  \dfrac{1-(1-\frac{1}{k^n})}{1-(1-\frac{1}{k^n})^{k^n}} \leq \lim_{n\rightarrow \infty}  \dfrac{\frac{1}{k^n}}{1-(1-\frac{1}{k^n})^{k^n}} \\
&=& \dfrac{\lim_{n\rightarrow \infty} \frac{1}{k^n}}{1-e^{-1}}=0.
\end{eqnarray*}
\end{example}

\begin{example}
If $\varphi:\mathbb{D}\rightarrow \mathbb{D}$ does not have a finite angular derivative at any point of the boundary, then $C_\varphi$ is uniformly mean ergodic on $H^\infty_\alpha(\mathbb{D})$. Indeed
\begin{equation*}
\lim_{n\rightarrow \infty}  \sup_{|z|>r_n} \dfrac{1-|z|}{1-|\varphi_n(z)|}\leq \limsup_{|z|\rightarrow 1}  \dfrac{1-|z|}{1-|\varphi(z)|}=0.
\end{equation*}
\end{example}

In Theorems \ref{t3} and \ref{t2} we will use of this fact that: Let $X=H^\infty(\mathbb{D})$ or $H^\infty_\nu(\mathbb{D})$. If $\{f_n\}\subset X$ is a sequence which converges to $0$, uniformly on compact subsets of $\mathbb{D}$, and $Q:X\rightarrow X$ is a compact operator then $\{Q(f_n)\}$  converges to $0$ in the norm of $X$.

\begin{theorem} \label{t3}
Consider $\alpha>0$ and $\varphi:\mathbb{D}\rightarrow \mathbb{D}$ analytic with $\varphi(0)=0$, , which is neither the identity nor an elliptic automorphism of $\mathbb{D}$. Then (in addition to the conditions in Theorem \ref{t1}) another necessary and sufficient condition for the (uniformly) mean ergodicity of $C_\varphi$ on $H^\infty_\alpha(\mathbb{D})$ is that  $\lim_{n\rightarrow \infty} \| \frac{1}{n} \sum_{j=1}^n C_{\varphi_j}\|_e=0$.
\end{theorem}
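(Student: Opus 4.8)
The plan is to show that the new condition is equivalent to the (by Theorem~\ref{t1} coinciding) notions of mean and uniform mean ergodicity. Write $K_0$ for the rank one, hence compact, operator $K_0f=f(0)$; since $\varphi_j\to 0$ uniformly on compact sets, $K_0$ is the only possible uniform limit of $M_n(C_\varphi)$. Because the essential norm is insensitive to compact perturbations, $\|M_n(C_\varphi)\|_e=\|M_n(C_\varphi)-K_0\|_e$ for every $n$, and this is the quantity I will bound from both sides. The forward implication is then immediate: if $C_\varphi$ is mean ergodic then, by Theorem~\ref{t1}, it is uniformly mean ergodic, so $\|M_n(C_\varphi)-K_0\|\to 0$, whence $\|M_n(C_\varphi)\|_e=\|M_n(C_\varphi)-K_0\|_e\le\|M_n(C_\varphi)-K_0\|\to 0$.

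For the converse I would argue by contraposition and reuse the functions built in the proof of (ii)$\Rightarrow$(iii) of Theorem~\ref{t1}. If \eqref{e1} fails there are $r,s>0$ and points $a_m$ with $|a_m|\to 1$ such that, for $1\le i\le m$, one has $\nu(a_m)/\nu(\varphi_i(a_m))\ge r$ and $|\varphi_i(a_m)|\ge s$, together with $f_m\in H^\infty_\alpha(\mathbb{D})$ obeying $\|f_m\|\le M$, $f_m(0)=0$ and $f_m(\varphi_i(a_m))\simeq |\varphi_i(a_m)|^2/\tilde{\nu}(\varphi_i(a_m))$. The extra input I need is that these $f_m$ vanish on compacta. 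Since $\nu(a_m)=(1-|a_m|)^\alpha\to 0$ while $\nu(\varphi_m(a_m))\le \nu(a_m)/r$, we get $|\varphi_m(a_m)|\to 1$, hence $|\varphi_i(a_m)|\ge|\varphi_m(a_m)|\to 1$ for all $i\le m$ by the Schwarz lemma. Taking the extremal functions of \cite{bier} in the concrete admissible form $f_w(z)=(1-|w|^2)^\alpha/(1-\bar w z)^{2\alpha}$, which lie in the unit ball and satisfy $f_w(w)\simeq 1/\tilde{\nu}(w)$ and $|f_w(z)|\le (1-|w|^2)^\alpha/(1-|z|)^{2\alpha}$, forces $\sup_{i\le m}|f_{\varphi_i(a_m)}|\to 0$ uniformly on each disk $\{|z|\le\rho\}$; combined with $\sum_i|f_{i,m}|\le M$ this shows that $f_m\to 0$ uniformly on compact subsets of $\mathbb{D}$ while $\|f_m\|\le M$.

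The decisive step, and the point I expect to be the main obstacle, is to turn this into a lower bound for the essential norm of each fixed $M_n(C_\varphi)$: one cannot test a single $M_n(C_\varphi)$ against a sequence concentrating at the boundary unless that sequence also tends to $0$ on compacta, so that a fixed compact operator cannot interfere — which is exactly what the previous paragraph secures. Fix $n$. For every $m\ge n$, using $f_m(0)=0$, $\tilde{\nu}\simeq\nu$ and the two pointwise bounds above,
\[
\|(M_n(C_\varphi)-K_0)f_m\|\ \ge\ \Big|\tfrac{1}{n}\sum_{i=1}^n f_m(\varphi_i(a_m))\Big|\,\nu(a_m)\ \simeq\ \tfrac{1}{n}\sum_{i=1}^n\frac{\nu(a_m)}{\tilde{\nu}(\varphi_i(a_m))}|\varphi_i(a_m)|^2\ \gtrsim\ rs^2 ,
\]
the terms being positive, so no cancellation occurs. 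Given any compact $Q$, the fact recalled before this theorem gives $\|Qf_m\|\to 0$ as $m\to\infty$, whence $\|M_n(C_\varphi)-K_0-Q\|\ge M^{-1}\limsup_m\|(M_n(C_\varphi)-K_0-Q)f_m\|\gtrsim rs^2/M$. Taking the infimum over compact $Q$ yields $\|M_n(C_\varphi)\|_e=\|M_n(C_\varphi)-K_0\|_e\gtrsim rs^2/M$ for every $n$, so $\|M_n(C_\varphi)\|_e\not\to 0$.

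By contraposition, $\|M_n(C_\varphi)\|_e\to 0$ forces \eqref{e1}, and hence the mean (equivalently uniform mean) ergodicity of $C_\varphi$ on $H^\infty_\alpha(\mathbb{D})$ by Theorem~\ref{t1}, completing the equivalence. I should stress that the whole argument hinges on the two refinements of the Theorem~\ref{t1} construction isolated above: the observation that failure of \eqref{e1} drives every iterate $\varphi_i(a_m)$ to the boundary as $m\to\infty$, and the concrete choice of peak functions making $f_m$ vanish on compacta; it is these that let the single family $\{f_m\}_{m\ge n}$ serve simultaneously as admissible test functions for all the fixed operators $M_n(C_\varphi)$, bypassing the usual difficulty that the approximating compact operators vary with $n$.
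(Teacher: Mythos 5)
Your proof is correct, and it reaches the theorem by the same overall strategy as the paper --- an identical forward implication, and a converse obtained by testing $M_n(C_\varphi)-K_0$ against a bounded family of interpolation-based functions that vanish uniformly on compacta --- but the technical core of the converse is genuinely different. The paper makes its test functions vanish on compact sets by inserting a factor $z^m$, and to keep the interpolated values bounded below it must then work with special radii $r_n$ satisfying $r_n^n\uparrow 1$ and run a geometric-sum estimate to obtain $|\varphi_n(a_n)|^n\geq s$, arriving at peak values $|\varphi_i(a_m)|^{2m}(1-|\varphi_i(a_m)|^2)^{-\alpha}$. You instead extract from the failure of \eqref{e1} the (correct) observation that $\nu(\varphi_m(a_m))\leq \nu(a_m)/r\rightarrow 0$, so that every point $\varphi_i(a_m)$, $i\leq m$, escapes to the boundary as $m\rightarrow\infty$; consequently the concrete peak functions $(1-|w|^2)^\alpha(1-\overline{w}z)^{-2\alpha}$ with $w=\varphi_i(a_m)$ already tend to $0$ uniformly on compacta, and the $z^m$ factor, the special radii, and the telescoping estimate all become unnecessary. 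Both arguments rest on the same two black boxes --- the uniform interpolation constant $M$ via Cowen--MacCluer and Berndtsson (invoked under identical hypotheses), and the fact, recalled before the theorem, that compact operators send bounded sequences vanishing on compacta to norm-null sequences --- so your argument is not weaker in its inputs. Your logical framing is also slightly cleaner: you get a lower bound $\|M_n(C_\varphi)\|_e=\|M_n(C_\varphi)-K_0\|_e\gtrsim rs^2/M$ uniform in $n$, rather than deriving the auxiliary statement \eqref{e} and contradicting it. The only blemishes are cosmetic: your $f_w$ lie in the ball of radius $2^\alpha$ rather than the unit ball, and the final bound carries an implied constant $2^{-\alpha}$ from $1-|\varphi_i(a_m)|^2\leq 2(1-|\varphi_i(a_m)|)$; both are harmlessly absorbed into $\gtrsim$.
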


\begin{proof}
Let $C_\varphi$ be uniformly mean ergodic, so $ \frac{1}{n} \sum_{j=1}^n C_{\varphi_j}\rightarrow K_0$, where $K_0$ is the point evaluation at $0$. We know that $K_0$ is a compact operator, therefore,
\begin{equation*}
\lim_{n\rightarrow \infty} \| \frac{1}{n} \sum_{j=1}^n C_{\varphi_j}\|_e=\lim_{n\rightarrow \infty} \| \frac{1}{n} \sum_{j=1}^n C_{\varphi_j}-K_0\|_e\leq \lim_{n\rightarrow \infty} \| \frac{1}{n} \sum_{j=1}^n C_{\varphi_j}-K_0\|=0.
\end{equation*}

Conversely, we show that Part (iii) of Theorem \ref{t1} holds. Let $T_n=\frac{1}{n} \sum_{j=1}^n C_{\varphi_j}$ and $\{f_m\}$ be a bounded sequence in $H^\infty_\alpha(\mathbb{D})$, where $f_m$ converges to $0$ uniformly on the compact subsets of $\mathbb{D}$. If $Q$ is a compact operator, then
\begin{equation*}
\|T_n+Q\|\gtrsim \lim_{m\rightarrow\infty} \|T_n f_m+Qf_m\|=\lim_{m\rightarrow\infty} \|T_n f_m\|.
\end{equation*}
Thus,
 \begin{equation} \label{e}
  \lim_{n\rightarrow\infty}\lim_{m\rightarrow\infty} \|T_n f_m\|=0.
 \end{equation}
 Let Part (iii) of Theorem \ref{t1} do not hold.
  Consider the sequence $(r_n)\in (0,1)$ where $r_n\uparrow 1$ and $r_n^n\uparrow 1$ (for example, $r_n=\frac{1+1/n}{e^{1/n}}$) but \ref{e1} does not hold. It is suffices to construct a bounded sequence $\{f_m\}$ in $H^\infty_\alpha(\mathbb{D})$ where $f_m$ converges to $0$ uniformly on compact subsets of $\mathbb{D}$, however,  \ref{e} does not hold.
Again there are an $r>0$ and a sequence $\{a_n\}\subset \mathbb{D}$ where $|a_n|\geq r_n$ and
\begin{equation*}
\dfrac{1-|a_n|}{1-|\varphi_n(a_n)|}\geq r.
\end{equation*}
Thus,
\begin{eqnarray*}
r&\leq& \dfrac{1-|a_n|}{1-|\varphi_n(a_n)|}= \dfrac{(1-|a_n|)\sum_{j=0}^{n-1} |\varphi_n(a_n)|^j}{1-|\varphi_n(a_n)|^n}\\
&\leq& \dfrac{(1-|a_n|)\sum_{j=0}^{n-1} |a_n|^j}{1-|\varphi_n(a_n)|^n}=\dfrac{1-|a_n|^n}{1-|\varphi_n(a_n)|^n}
\end{eqnarray*}
Since  $|a_n|^n\rightarrow 1$ there are some $s>0$ and $k\in \mathbb{N}$ such that $|\varphi_n(a_n)|\geq|\varphi_n(a_n)|^n \geq s$, for all $n\geq k$. Without loss of generality, we can let $k=1$. Thus, by the Schwarz lemma  for $1\leq i \leq n$,
\begin{equation*}
\dfrac{1-|a_n|}{1-|\varphi_i(a_n)|}\geq \dfrac{1-|a_n|}{1-|\varphi_n(a_n)|}\geq r, \qquad |\varphi_i(a_n)|^n\geq |\varphi_n(a_n)|^n\geq s
\end{equation*}
Similar to the proof of Theorem \ref{t1}, there are some $M>0$ and $\{f_{i,n}\}_{i=1}^n$ in $H^\infty(\mathbb{D})$ such that
\begin{itemize}
\item[(a)] $f_{i,n}(\varphi_i(a_n))=1$ and $f_{i,n}(\varphi_j(a_n))=0$, for $i\neq j$.
\item[(b)] $\sup_{z\in \mathbb{D}} \sum_{i=1}^n |f_{i,n}(z)|\leq M$.
\end{itemize}
Now we construct the functions $f_m$ as follows:
\begin{equation*}
f_m(z)=\sum_{i=1}^m \dfrac{z^m\overline{\varphi_i(a_m)}^m f_{i,m}(z)}{(1-\overline{\varphi_i(a_m)}z)^\alpha}.
\end{equation*}
It is easily to check that  $\sup_{z\in \mathbb{D}} |f_m(z)|(1-|z|)^\alpha \leq M$, $f_m$ converges to $0$ uniformly on the compact subsets of $\mathbb{D}$, and
\begin{equation*}
f_m(\varphi_i(a_m))=\dfrac{|\varphi_i(a_m)|^{2m}}{(1-|\varphi_i(a_m)|^2)^\alpha}.
\end{equation*}
Therefore, for $m\geq n$
\begin{equation*}
\dfrac{1}{n}\sum_{i=1}^n f_m(\varphi_i(a_m))(1-|a_m|)^\alpha =\dfrac{1}{n}\sum_{i=1}^n \Big( \dfrac{1-|a_m|}{1-|\varphi_i(a_m)|^2}\Big)^\alpha |\varphi_i(a_m)|^{2m} \geq \dfrac{s^2r^\alpha}{2^\alpha}.
\end{equation*}
Thus, Equation \ref{e} does not hold and this is a contradiction.
\end{proof}

The equivalency of Parts (i), (ii), and (iii) of the following theorem has been shown in \cite[Theorem 3.3]{beltran1}. Here we add Part (iv).
\begin{theorem} \label{t2}
Consider $\varphi:\mathbb{D}\rightarrow \mathbb{D}$ analytic with $\varphi(0)=0$, , which is neither the identity nor an elliptic automorphism of $\mathbb{D}$. Then the following statements are equivalent:
\begin{itemize}
\item[(i)] $C_\varphi$ is mean ergodic on $H^\infty (\mathbb{D})$.
\item[(ii)] $C_\varphi$ is uniformly mean ergodic on  $H^\infty (\mathbb{D})$.
\item[(iii)] $\lim_{n\rightarrow \infty} \|\varphi_n\|_{\infty}=0.$
\item[(iv)] $\lim_{n\rightarrow \infty} \| \frac{1}{n} \sum_{j=1}^n C_{\varphi_j}\|_e=0.$
\end{itemize}
\end{theorem}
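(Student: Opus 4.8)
The plan is to graft condition (iv) onto the already-established cycle (i)$\Leftrightarrow$(ii)$\Leftrightarrow$(iii) of \cite[Theorem 3.3]{beltran1} by proving (ii)$\Rightarrow$(iv) and (iv)$\Rightarrow$(iii). The first implication I would dispatch exactly as in the opening half of the proof of Theorem \ref{t3}: uniform mean ergodicity gives $T_n := \frac{1}{n}\sum_{j=1}^n C_{\varphi_j} \to K_0$ in operator norm, where $K_0 f = f(0)$ is the evaluation at the Denjoy--Wolff point $0$ (the limit is $K_0$ because $\varphi_i(z)\to 0$ forces $T_n f(z)\to f(0)$). Since $K_0$ is rank one, hence compact, adding it does not change the essential norm, so $\|T_n\|_e = \|T_n-K_0\|_e \le \|T_n-K_0\| \to 0$.

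The real content is (iv)$\Rightarrow$(iii), which I would prove by contraposition, modelled on the failure argument of Theorem \ref{t3} but with a simplification peculiar to $H^\infty(\mathbb{D})$. First I record a dichotomy. Because $\varphi(0)=0$, the Schwarz lemma makes $\{\|\varphi_n\|_\infty\}$ nonincreasing. If $\|\varphi_N\|_\infty = t<1$ for some $N$, then $\varphi_N(\mathbb{D})\subseteq\mathbb{D}(0,t)$ and, $\varphi_N$ being a non-automorphism fixing $0$, a Schwarz--Pick contraction on the compact set $\overline{\mathbb{D}(0,t)}$ gives $\varphi_N(\mathbb{D}(0,t))\subseteq \mathbb{D}(0,\mu t)$ with $\mu<1$, whence $\|\varphi_{kN}\|_\infty\to 0$ and so $\|\varphi_n\|_\infty\to 0$. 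Contrapositively, \emph{if (iii) fails then $\|\varphi_n\|_\infty = 1$ for every $n$}. This is the gain over Theorem \ref{t3}: for each $n$ I may choose $a_n$ with $|\varphi_n(a_n)|\ge 1-\tfrac1n$, and the Schwarz lemma then yields $|\varphi_i(a_n)|\ge 1-\tfrac1n$ for all $1\le i\le n$, so the finite orbit $\{\varphi_i(a_n)\}_{i=1}^n$ is driven to the boundary without engineering a sequence $r_n$ with $r_n^n\uparrow 1$.

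From here the construction parallels Theorem \ref{t3} at $\alpha=0$. Invoking \cite[Lemma 13]{cowen2} to see that $\{\varphi_i(a_n)\}_{i=1}^n$ is interpolating with a separation constant depending only on $\varphi$, the estimate of \cite{bern} furnishes $\{f_{i,n}\}_{i=1}^n\subset H^\infty(\mathbb{D})$ with $f_{i,n}(\varphi_j(a_n))=\delta_{ij}$ and $\sup_z\sum_i|f_{i,n}(z)|\le M$, with $M$ independent of $n$. I would then set
\begin{equation*}
f_m(z)=\sum_{i=1}^m z^m\,\overline{\varphi_i(a_m)}^{\,m}\,f_{i,m}(z),
\end{equation*}
so that $\|f_m\|_\infty\le M$, $f_m\to 0$ uniformly on compacta (the $z^m$ factor), and $f_m(\varphi_i(a_m))=|\varphi_i(a_m)|^{2m}$. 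For $m\ge n$ this gives
\begin{equation*}
\|T_n f_m\|_\infty\ \ge\ |T_n f_m(a_m)|\ =\ \frac{1}{n}\sum_{i=1}^n|\varphi_i(a_m)|^{2m}\ \ge\ \Big(1-\tfrac1m\Big)^{2m}\ \to\ e^{-2}\quad(m\to\infty),
\end{equation*}
so $\liminf_m\|T_n f_m\|\ge e^{-2}$ for every $n$. Since each $f_m\to 0$ on compacta, the fact quoted before Theorem \ref{t3} gives $\|Qf_m\|\to 0$ for every compact $Q$, hence $M\,\|T_n-Q\|\ge\liminf_m\|T_n f_m\|\ge e^{-2}$ and therefore $\|T_n\|_e\ge e^{-2}/M$ for all $n$, contradicting (iv).

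I expect the main obstacle to be the uniform interpolation step: one must verify that the separation constant supplied by \cite[Lemma 13]{cowen2} for the orbit segments $\{\varphi_i(a_n)\}_{i=1}^n$ is bounded below \emph{independently of $n$}, so that $M$ in the Berndtsson bound is uniform, and that the lemma's hypotheses are genuinely in force once $|\varphi_i(a_n)|\to 1$. The dichotomy $\|\varphi_n\|_\infty\to 0$ versus $\|\varphi_n\|_\infty\equiv 1$ also merits a careful Schwarz--Pick write-up, though it is routine.
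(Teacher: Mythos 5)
Your proposal is correct and takes essentially the same approach as the paper: (ii)$\Rightarrow$(iv) via compactness of the rank-one limit $K_0$, and (iv)$\Rightarrow$(iii) by contraposition, pushing an orbit segment $\{\varphi_i(a_m)\}_{i=1}^m$ to the boundary via the Schwarz lemma, invoking the uniform interpolation of \cite[Lemma 13]{cowen2} with Berndtsson's bound, and testing the essential norm against functions of the form $z^m$ times an interpolating function, which are bounded and tend to zero on compacta. The only differences are expository: you prove the dichotomy $\|\varphi_n\|_\infty\to 0$ versus $\|\varphi_n\|_\infty\equiv 1$ and the implication (ii)$\Rightarrow$(iv) explicitly (the paper leaves both implicit), and you build $f_m$ directly from the Berndtsson functions $f_{i,m}$ rather than first interpolating the values $\overline{\varphi_i(a_m)}^m$ by a function $g_m$ and setting $f_m(z)=z^m g_m(z)$, which is the same construction written in one step.
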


\begin{proof}
Let $T_n=\frac{1}{n} \sum_{j=1}^n C_{\varphi_j}$ and $\{f_m\}$ be a bounded sequence in $H^\infty(\mathbb{D})$, where $f_m$ converges to $0$ uniformly on the compact subsets of $\mathbb{D}$. Again, we must have
 \begin{equation}\label{e5}
  \lim_{n\rightarrow\infty}\lim_{m\rightarrow\infty} \|T_n f_m\|_\infty=0,
 \end{equation}
Let $\|\varphi_n\|_\infty\nrightarrow 0$, so $\|\varphi_n\|_\infty=1$, for all $n$. We construct a bounded sequence $\{f_m\}$ in $H^\infty(\mathbb{D})$ where $f_m$ converges to $0$ uniformly on the compact subsets of $\mathbb{D}$, but  \ref{e5} does not hold. There is some $r>0$ and $\{a_n\}\subset \mathbb{D}$ such that
$|\varphi_n(a_n)|^{2n}>r$. Hence, by the Schwarz lemma
\begin{equation*}
|\varphi_j(a_n)|^{2n}>r \ \ and  \ \ |\varphi_j(a_n)|>r,   \qquad 1\leq j\leq n.
\end{equation*}
Thus, there is a bounded  sequence $\{g_m\}$ in $H^\infty(\mathbb{D})$ where
\begin{equation*}
g_m(\varphi_i(a_m))=\overline{\varphi_i(a_m)}^m, \qquad 1\leq j\leq m.
\end{equation*}
Consider the functions $f_m(z)=z^mg_m(z)$. Thus, $\{f_m\}$ is a bounded sequence in $H^\infty(\mathbb{D})$ which converges to $0$ uniformly on the compact subsets of $\mathbb{D}$. Also, for $m\geq n$
\begin{equation}
\dfrac{1}{n}\sum_{i=1}^n f_m(\varphi_i(a_m))=\dfrac{1}{n}\sum_{i=1}^n  |\varphi_i(a_m)|^{2m} \geq r.
\end{equation}
This is a contradiction.
\end{proof}

\subsection*{Boundary Denjoy-Wolff point}
In \cite[Theorem 3.6]{beltran1} and \cite[Theorem 3.8]{jorda}, it has shown that if $\varphi$ has boundary Denjoy-Wolff point, then $C_\varphi$ is not mean ergodic on $H^\infty (\mathbb{D})$ and $H^\infty_\nu (\mathbb{D})$, respectively.

\subsection*{Disk automorphisms}
Let $\varphi$ be an automorphism of the disk. If $\varphi$ has a boundary Denjoy-Wolff point then by using the preceding part, $C_\varphi$ failed to be mean ergodic on $H^\infty(\mathbb{D})$ and $H^\infty_\nu(\mathbb{D})$.
If $\varphi$ is an elliptic disk automorphism, then there are some
$\lambda\in
\partial\mathbb{D}$ and some
disk automorphism $\psi$ such that $\psi \circ \varphi\circ \psi^{-1}(z)=\lambda z$.  In this case, Equation \ref{e1} does not hold. But 
\cite[Proposition 18]{wolf1} and \cite[Theorem 3.8]{jorda} imply that:
$C_\varphi$ is (uniformly) mean ergodic on $H^\infty_\nu(\mathbb{D})$ if and only if $\lambda$ is a root of $1$. Also, \cite[Theorem 2.2]{beltran1} gives a similar result on $H^\infty(\mathbb{D})$.

\vspace*{1cm}
 Hamzeh Keshavarzi

E-mail: Hamzehkeshavarzi67@gmail.com

Department of Mathematics, College of Sciences,
Shiraz University, Shiraz, Iran

\end{document}